
\documentclass[11pt]{article}



\usepackage[pdftex]{graphicx}

\usepackage{amssymb}
\usepackage{amsthm}

\usepackage{accents}

\usepackage{tipa}






\makeatletter
\renewcommand\section{\@startsection{section}{1}{\z@}%
                                  {-3.5ex \@plus -1ex \@minus -.2ex}%
                                  {2.3ex \@plus.2ex}%
                                  {\normalfont\large\bfseries}}
\makeatother

\DeclareGraphicsExtensions{.jpg}
\begin{document}

\title{Domination structure for number three}

\author{Misa Nakanishi \thanks{E-mail address : nakanishi@2004.jukuin.keio.ac.jp}}
\date{}
\maketitle

\begin{abstract}
From a research of several
recent papers, in the first part, we are concerned with domination
number in cubic graphs and give a sufficient condition of Reed's conjecture.
In the second part, from a perspective, we study the structure of a minimum dominating
set in 3-connected graphs. It is derived from a collection of cycles with
length 0 mod 3. \\
keywords: cubic graph, 3-connected graph, minimum dominating set, ear decomposition
\end{abstract}

\newtheorem{thm}{Theorem}[section]
\newtheorem{lem}{Lemma}[section]
\newtheorem{prop}{Proposition}[section]
\newtheorem{cor}{Corollary}[section]
\newtheorem{rem}{Remark}[section]
\newtheorem{conj}{Conjecture}[section]
\newtheorem{claim}{Claim}[section]
\newtheorem{fact}{Fact}[section]

\newtheorem{defn}{Definition}[section]
\newtheorem{propa}{Proposition}
\renewcommand{\thepropa}{\Alph{propa}}
\newtheorem{conja}[propa]{Conjecture}

\section{Introduction}
\label{intro}

\noindent In this paper, a graph $G$ is simple and undirected with a vertex set $V$ and an edge set $E$. We follow \cite{Diestel}, \cite{Ota} for notations and properties. For $v \in V$, the open neighborhood is denoted by $N_G(v)$ and the closed neighborhood is denoted by $N_G[v]$ (or, $N(v)$ and $N[v]$ respectively), also for $W \subseteq V$, $N_G(W) = {\displaystyle \bigcup_{v \in W}} N_G(v)$ and $N_G[W] = {\displaystyle \bigcup_{v \in W}} N_G[v]$ (or, $N(W)$ and $N[W]$ respectively). A {\it dominating
set} $X \subseteq V$ is such that every vertex of $V \setminus X$ is adjacent to a vertex of $X$. A minimum dominating set is called a {\it d-set}. The minimum cardinality taken over all minimal dominating sets of $G$ is the {\it domination number} denoted by $\gamma(G)$. The minimum cardinality taken over all maximal independent sets of $G$ is the {\it independent domination number} denoted by $i(G)$. For a d-set $X$ and a set $R$, $X(R)$ denotes $X \cap R$. For a set of graphs $S$ and $m \geq 1$, $mS$ denotes the vertex-disjoint union of $m$ members. The vertex-disjoint union of $S$ is denoted by $\bigoplus S$.    \\

\noindent For the domination number of a graph, in decades the research on cubic graphs has intensively studied that show several important results. The complexity of minimum dominating set (MDS) in cubic graphs is NP-hard \cite{Alimonti}. A random 3-regular graph asymptotically almost surely has no 3-star factors \cite{Assiyatun}. Reed indicated that almost all cubic graphs are Hamiltonian, also the upper bound of the domination number of a connected cubic graph $G$ is conjectured as $\lceil |G|/3 \rceil$ \cite{Reed}. Then the counterexamples that exceed the bound have shown, for example, there is an extremal graph of the domination number 21 over 60 vertices following the series of cubic graphs beyond the boundary \cite{Kostochka} \cite{Kelmans}. \\

\noindent In the first part, we show that the connected cubic graphs that have the domination number above the bound have a minimum dominating set as an independent set. Otherwise, the conjecture is true. \\

\noindent A sufficient condition for $\gamma(G) = i(G)$ was represented for a general graph $G$ as an induced subgraph isomorphic to $K_{1, 3}$, also called 3-star, free \cite{Allan}. Then an induced subgraph, say $I$, is defined as $N[v_1] \cup N[v_2]$ on two adjacent vertices $v_1$ and $v_2$ with degree at least three we call core. We observe $I$ as a forbidden subgraph for $\gamma(G) = i(G)$ with the simplest proof.

\begin{propa}[\cite{Allan}]
If $G$ does not have an induced subgraph isomorphic to $K_{1, 3}$, then $\gamma(G) = i(G)$.
\end{propa}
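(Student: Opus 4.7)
The plan is to prove the easy direction $\gamma(G)\le i(G)$ by noting that every independent dominating set is in particular a dominating set, and then to establish the reverse inequality by an exchange argument on a carefully chosen minimum dominating set. Specifically, I would take a dominating set $D$ of cardinality $\gamma(G)$ that, among all minimum dominating sets, minimizes the number of edges in the induced subgraph $G[D]$, and argue that such a $D$ must be independent.

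Suppose for contradiction that $G[D]$ contains an edge $uv$. For each $x\in D$ let $P_x := \{w\in V\setminus D : N(w)\cap D = \{x\}\}$ denote the set of external private neighbors of $x$ with respect to $D$. Since $v\in D\cap N(u)$ already dominates $u$, the minimality of $|D|$ forces $P_u\neq\emptyset$; otherwise $D\setminus\{u\}$ would still dominate $V$. Fix some $w\in P_u$.

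The key step, and the only place where the $K_{1,3}$-free hypothesis is used, is to show that $P_u$ induces a clique. If there were $w_1,w_2\in P_u$ with $w_1w_2\notin E$, then $v,w_1,w_2\in N(u)$ would be pairwise non-adjacent (we have $vw_i\notin E$ for $i=1,2$ by the definition of $P_u$, and $w_1w_2\notin E$ by assumption), so $\{u,v,w_1,w_2\}$ would induce a $K_{1,3}$, contradicting the hypothesis. Hence $P_u$ is a clique, and the fixed vertex $w$ dominates all of $P_u$.

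Finally, set $D' := (D\setminus\{u\})\cup\{w\}$. One verifies $D'$ is dominating: the vertices of $P_u$ are dominated by $w$; $u$ itself is dominated by $v$; every other neighbor of $u$ in $V\setminus D$ has, by the definition of $P_u$, some dominator already in $D\setminus\{u\}\subseteq D'$; and the vertices in $D\setminus\{u\}$ dominate themselves. Since $N(w)\cap D=\{u\}$, the new vertex $w$ has no neighbor in $D\setminus\{u\}$, whereas $u$ had at least $v$ as a neighbor in $D$. Hence $G[D']$ has strictly fewer edges than $G[D]$, contradicting the choice of $D$. Therefore $D$ is independent and $i(G)\le |D|=\gamma(G)$, completing the proof. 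The main subtle point is the clique observation for $P_u$; the remaining verifications are routine.
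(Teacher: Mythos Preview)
Your proof is correct and is essentially the classical Allan--Laskar argument. Note, however, that the paper does not actually supply its own proof of this proposition; it merely cites \cite{Allan} and then proves the related Proposition~B (the $I$-free condition) instead. That neighbouring proof uses the same device you chose---take a minimum dominating set $X$ with $|E(G[X])|$ minimal and perform a vertex exchange on an edge $xy\in E(G[X])$---but exploits the hypothesis differently: since no two adjacent vertices both have degree at least three, one endpoint of $xy$ has degree at most two, and the exchange is done on its unique outside neighbour without needing any clique observation. Your use of the $K_{1,3}$-free hypothesis to force $P_u$ to be a clique is exactly the extra idea needed for Proposition~A, and the edge-count comparison you give for $D'$ versus $D$ is clean and correct.
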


\begin{propa}[\cite{Cockayne}]
For a graph $G$, if $I \not \subseteq G$ then $\gamma(G) = i(G)$. 
\end{propa}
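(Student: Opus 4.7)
The plan is to prove Proposition~B by a ``minimum counterexample'' argument on a cleverly chosen d-set: start from a minimum dominating set $D$ with the fewest possible interior edges, then show that any edge inside $G[D]$ either exhibits the forbidden core $I$ or admits a size-preserving swap that strictly reduces the number of interior edges. Since $\gamma(G) \le i(G)$ always holds, exhibiting an independent d-set suffices.

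Let $D$ be a d-set minimizing $|E(G[D])|$ and, toward a contradiction, let $v_1v_2 \in E(G[D])$. Minimality of $|D|$ forces each $v_i$ to possess a private neighbor with respect to $D$; since each $v_i$ has the other $v_{3-i}$ as a $D$-neighbor, $v_i$ cannot itself be such a private neighbor, so there exist external private neighbors $p_i \in V \setminus D$ with $N_G[p_i] \cap D = \{v_i\}$, and $p_1 \neq p_2$. In particular $\deg_G(v_i) \ge 2$ for $i=1,2$. If moreover $\deg_G(v_1) \ge 3$ and $\deg_G(v_2) \ge 3$, then the induced subgraph $G\bigl[N_G[v_1] \cup N_G[v_2]\bigr]$ is precisely a core, contradicting $I \not\subseteq G$.

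Otherwise we may assume $\deg_G(v_1) = 2$ with $N_G(v_1) = \{v_2, p_1\}$, and consider $D' := (D \setminus \{v_1\}) \cup \{p_1\}$. The set $D'$ still dominates $G$, for the only vertex that could have lost its dominator is $v_1$ itself, and $v_1$ is adjacent to $p_1 \in D'$; clearly $|D'| = |D|$; and, crucially, the private-neighbor property of $p_1$ guarantees that $p_1$ has no neighbor in $D \setminus \{v_1\}$, so no new interior edge is created while the edge $v_1 v_2$ disappears. Thus $|E(G[D'])| < |E(G[D])|$, contradicting the choice of $D$. Hence $D$ is independent and $\gamma(G) = i(G)$.

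The main obstacle I anticipate is the careful bookkeeping around the swap in the degree-two branch: one must simultaneously confirm that $D'$ still dominates $G$ and that the swap does not introduce any new interior edge, both of which hinge on the precise definition of private neighbor. Everything else reduces to the standard private-neighbor argument underlying the Allan--Laskar proof, streamlined here by the direct identification of the core $I$ as the obstruction in the ``both endpoints of degree $\ge 3$'' case.
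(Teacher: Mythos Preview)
Your proof is correct and follows essentially the same line as the paper's: choose a d-set with the fewest internal edges, use the absence of the core $I$ to force one endpoint of any internal edge to have degree exactly two, and then swap that endpoint for its unique outside neighbour. The only cosmetic difference is that you invoke the private-neighbour property up front, which guarantees in one stroke that the swapped-in vertex $p_1$ has no $D$-neighbour; the paper instead treats the two possibilities for $x'$ separately (if $x'$ has another $X$-neighbour it deletes $x$ outright and contradicts minimality of $|X|$, otherwise it performs your swap). Both routes are the same argument, yours being marginally cleaner.
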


\begin{proof}
Let $X$ be a d-set of $G$ and $E(X)$ be minimal. For $x, y \in X$ such that $xy \in E(G)$, let $d_{G}(x) = 2$ and $N_{G}(x) \setminus \{y\} = \{x'\}$. For all $z \in N_{G}(x') \setminus \{x\}$, if $z \notin X$ then $(X \setminus \{x\}) \cup \{x'\} = X'$ that is a d-set. $||X|| - 1 \geq ||X'||$ contrary to the minimality of $E(X)$. If there is $z \in N_{G}(x') \setminus \{x\}$ such that $z \in X$ then $X \setminus \{x\} = X''$ that is a d-set and contrary to the minimality of $V(X)$. 
\end{proof}

\noindent A 3-connected cubic graph was conjectured as the difference between the independent domination number and the domination number is one, but it was disproved as taken in infinity.

\begin{propa}[\cite{Zverovich}]
For any $c \in \{0, 1, 2, 3\}$ and any integer $k \geq 0$ there exist infinitely many cubic graphs with connectivity $c$ (say one as $G$) for which $i(G) - \gamma(G) = k$.
\end{propa}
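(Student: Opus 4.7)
\medskip

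\noindent\textbf{Proof proposal.} The plan is to build the examples from a single cubic ``gap gadget'' and amplify it $k$ times.

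First, I would construct a small cubic graph $H_0$ with two distinguished ``port'' vertices of internal degree $2$ (the missing edges being reserved for gluing into a larger cubic host) such that every minimum dominating set of $H_0$ must contain a pair of adjacent vertices, while every independent dominating set of $H_0$ is strictly larger, with the excess being exactly $1$. Concretely, $H_0$ is designed so that a unique ``cheap'' dominating choice uses an edge $xy$ inside $H_0$, and any independent replacement of $\{x,y\}$ must include an extra vertex elsewhere. A natural candidate is a small twisted prism-like block in which one triangular face forces two adjacent covers.

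Second, for $k \geq 1$, I would take $k$ disjoint copies $H_0^{(1)}, \ldots, H_0^{(k)}$ and join their ports through a cubic ``binder'' (for instance a cubic cycle threading through all $2k$ ports) to produce a cubic graph $G$. The computation then proceeds via two matched pairs of inequalities: an upper bound on $\gamma(G)$ obtained by gluing the cheap edge-based d-set of each $H_0$ with a fixed pattern on the binder; a parallel upper bound on $i(G)$ obtained from the independent alternative in each gadget; and matching lower bounds from the rigidity of each $H_0^{(i)}$, i.e.\ from the fact that the presence of the adjacent binder vertex cannot lower the local domination cost.

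Third, to realise each prescribed connectivity $c \in \{0,1,2,3\}$, I would tune the binder: for $c = 3$, pick enough internally disjoint gadget-to-gadget paths so that Menger's theorem yields $\kappa(G) = 3$; for $c = 2$ and $c = 1$, deliberately introduce a $2$-cut or a cut-vertex between consecutive gadgets in the binder; for $c = 0$, take the disjoint union of the connected version with an additional cubic component. Infinitely many graphs of each type are obtained by inserting additional ``neutral'' cubic blocks in which $\gamma$ and $i$ grow by equal amounts, so neither the gap $k$ nor the connectivity $c$ changes.

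The main obstacle is the lower bound in the second step: showing that an independent dominating set cannot economise by using the binder to cover a port from outside the gadget, thereby neutralising the forced $+1$. A clean way to handle this is to engineer $H_0$ so that every vertex within distance $1$ of a port is dominated only by internal vertices in any small dominating set --- a ``shielding'' property that reduces the global count to a disjoint sum over gadgets. Establishing this shielding rigorously, by case analysis on how a dominating set can intersect the few vertices near each port, is the technical core of the construction.
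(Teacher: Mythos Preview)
The paper does not prove this proposition at all: it is quoted verbatim from \cite{Zverovich} as background, with no argument supplied. There is therefore no ``paper's own proof'' to compare your proposal against.

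As for the proposal itself, it is a plausible outline of the standard gadget-and-glue strategy, but it is only an outline. You never actually exhibit the gadget $H_0$; ``a small twisted prism-like block in which one triangular face forces two adjacent covers'' is a description of what you would like, not a construction together with a verification that $i(H_0)-\gamma(H_0)=1$ and that this gap survives attachment at the ports. You correctly identify the real difficulty --- proving that in the assembled graph an optimal independent dominating set cannot borrow a binder vertex to dominate a port and thereby erase the $+1$ in some copy --- but you do not resolve it; the ``shielding'' property is asserted, not established. Similarly, the connectivity tuning and the ``neutral block'' padding are gestures rather than verified constructions (for $c=3$ in particular you must check that your binder really yields three internally disjoint paths between \emph{every} pair of vertices, not just between gadgets). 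In short, nothing here is wrong in spirit, but every load-bearing step is deferred; to turn this into a proof you would have to write down explicit graphs and carry out the case analysis you allude to.
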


\noindent The next statement is suggested and has been widely discussed. 

\begin{conja}[\cite{Reed}]
Every connected cubic graph $G$ contains a dominating set of at most $\lceil |G|/3 \rceil$ vertices.
\end{conja}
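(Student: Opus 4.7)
The plan is to reduce Conjecture A to the restricted case suggested by the abstract: a connected cubic graph $G$ with $\gamma(G) > \lceil |G|/3 \rceil$ must have every minimum dominating set independent. By Proposition B this already handles in one stroke every connected cubic graph containing no induced core $I$, so the remaining task is to settle the independent case.

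First, I would take a d-set $X$ with $|E(X)|$ minimum and, subject to that, $|X|$ minimum. If $E(X) \neq \emptyset$, pick adjacent $x, y \in X$. Following the swap argument in the proof of Proposition B, $N[x] \cup N[y]$ must induce a core $I$, for otherwise one obtains a d-set $X'$ with $|E(X')| < |E(X)|$ or $|X'| < |X|$. Since $G$ is cubic, the induced cores on $N[x] \cup N[y]$ form a short finite list, and in each case I would seek a further local exchange that decreases $|X|$ below $\lceil |G|/3 \rceil$, completing the reduction to the independent case.

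Second, when $X$ is independent every $x \in X$ has its three neighbors in $V \setminus X$. With $n_i = |\{v \in V \setminus X : |N(v) \cap X| = i\}|$, the degree identities $3|X| = n_1 + 2n_2 + 3n_3$ and $|V| = |X| + n_1 + n_2 + n_3$ combine to $|V| - 3|X| = |X| - n_2 - 2n_3$, so $|X| \leq |V|/3$ is equivalent to the over-domination inequality $n_2 + 2n_3 \leq |X|$. Minimality of $X$ yields $n_1 \geq |X|$ (each $x \in X$ has a private neighbor), which alone gives only $|X| \leq |V|/2$; the real work is a sharp upper bound on the over-domination $n_2 + 2n_3$. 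Here I would import the cycle structure of the paper's second part, using cycles of length $0 \bmod 3$ in an ear decomposition of $G$ to control the distribution of over-dominated vertices.

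The main obstacle is precisely this over-domination bound. Proposition C shows that $i(G) - \gamma(G)$ can be made arbitrarily large, so generic independent-domination estimates fail; and the extremal graphs in \cite{Kostochka} and \cite{Kelmans} have d-sets with large over-domination, so any proof must exploit a feature that specifically excludes those configurations rather than a purely local swap. Verifying that the cycle-based decomposition indeed supplies a charge-balancing rule producing $n_2 + 2n_3 \leq |X|$, and that this global argument does not clash with the local exchanges used in the first step, is the place where I expect the proof to be most fragile.
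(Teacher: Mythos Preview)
The statement you are trying to prove is a \emph{conjecture}, and the paper does not prove it---indeed, immediately after stating it the paper says ``It has some counterexamples in cubic graphs with connectivity one and two'' and cites \cite{Kostochka} and \cite{Kelmans}. What the paper actually proves is Theorem~\ref{T1}: if $\gamma(G)>\lceil |V|/3\rceil$ then $\gamma(G)=i(G)$. That is only your Step~1 (the reduction to the independent case), and it is the \emph{entire} content of the paper's first part. There is no Step~2 in the paper, because Step~2 is false.

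Your Step~2 cannot be rescued. The graph $H_4$ of \cite{Kostochka} is a connected cubic graph with $\gamma(H_4)>\lceil |H_4|/3\rceil$, and the paper explicitly notes that $H_4$ has a minimum dominating set that is independent. Hence the over-domination inequality $n_2+2n_3\le |X|$ you are aiming for fails in $H_4$: no charging scheme, cycle decomposition, or ear argument can produce it, because it is simply not true for all connected cubic graphs. The phrase ``any proof must exploit a feature that specifically excludes those configurations'' betrays the misunderstanding---the conjecture is a universal statement over all connected cubic graphs, so a genuine counterexample is fatal; there is no ``feature'' to exploit. In short, your Step~1 is exactly the paper's Theorem~\ref{T1} (and is carried out there by a rather different induction on $|X|-|X'|$ than the local swaps you sketch), while your Step~2 attempts to prove something false.
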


\noindent It has some counterexamples in cubic graphs with connectivity one and two. A graph $H_4$ in \cite{Kostochka} is observed as it has a minimum dominating set as an independent set. \\

\noindent In the second part, we refine the structure of path covers used in the proof of cubic graph domination by Reed \cite{Reed}. According to the proof, a cubic graph is almost covered by paths of length 0 mod 3. By finding cycles of length 0 mod 3 in a 3-connected graph, we reveal the structure of 3-connected graphs. As for a 2-connected graph, its structure is common to us.  

\begin{propa}[\cite{Diestel}]
A graph is 2-connected if and only if it can be constructed from a cycle by successively adding $H$-paths to graphs $H$ already constructed.
\end{propa}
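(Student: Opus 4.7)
The plan is to prove both implications separately by a standard ear-decomposition argument.

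\emph{Sufficiency.} I would proceed by induction on the number of $H$-paths used in the construction. A cycle is 2-connected, which handles the base case. For the inductive step, let $H$ be 2-connected and let $P$ be an $H$-path with endpoints $u, v \in V(H)$ and internal vertices in $V(G) \setminus V(H)$. I would verify that $H \cup P$ has no cut vertex by exhibiting, for any pair of its vertices, two internally disjoint paths between them: if both endpoints lie in $V(H)$, use 2-connectivity of $H$ directly; if both lie on $P$, use the two sub-arcs of $P$ together with a $u$--$v$ path through $H$; if one lies in $V(H) \setminus V(P)$ and the other is internal to $P$, combine the two sub-arcs of $P$ (to $u$ and to $v$) with two internally disjoint paths in $H$ from $u$ and $v$ to the fixed vertex, obtained via Menger applied to 2-connected $H$.

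\emph{Necessity.} Suppose $G$ is 2-connected, so $\delta(G) \geq 2$ and $G$ contains a cycle $C_0$. Let $H \subseteq G$ be a subgraph maximal among those constructible from $C_0$ by successively adding $H$-paths. I claim $H = G$. If $V(H) \neq V(G)$, then since $G$ is connected there is an edge $xy$ with $x \in V(H)$ and $y \notin V(H)$. Because $G$ is 2-connected, $G - x$ is connected, so a shortest path $Q$ from $y$ to $V(H)$ in $G - x$ has all its internal vertices outside $V(H)$, and $xy$ followed by $Q$ is an $H$-path that extends $H$, contradicting maximality. If instead $V(H) = V(G)$ but some edge $e \in E(G) \setminus E(H)$ exists, then $e$ itself is an $H$-path of length one and can be added to $H$, again contradicting maximality. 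Hence $H = G$.

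The step I expect to require the most care is the sufficiency direction, specifically verifying 2-connectivity after adding an $H$-path when one endpoint of the chosen pair is interior to the new ear; the rest is a routine extremal argument, once the shortest-path choice in the necessity direction is spelled out to guarantee that the internal vertices of $Q$ avoid $V(H)$.
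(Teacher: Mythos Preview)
Your argument is correct and is the standard ear-decomposition proof; the only spot that deserves a sentence more of justification is the appeal to Menger in the sufficiency direction (that in a 2-connected $H$ one can find internally disjoint $w$--$u$ and $w$--$v$ paths, not merely two internally disjoint paths from $w$ to the set $\{u,v\}$), but this is routine. Note, however, that the paper does not give its own proof of this proposition: it is quoted as a known result from Diestel's textbook, so there is no in-paper argument to compare against. Your write-up is essentially the proof one finds in that reference.
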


\noindent Let $C_G$ be a collection of cycles with length 0 mod 3 for a graph $G$.
Two cycles are connecting without {\it seam} if and only if one of them is constructed
by adding one ear (not a cycle) to the other. Let $C_{SG}$ be a maximal
subset of $C_G$ such that cycles are connecting without seam. Let $D_{SG}$ be a maximal subset of $C_{SG}$ such that cycles are connecting without seam but dropped when no exclusive vertex is contained. Let $\texttoptiebar{C}_{SG}$ be a graph composed of members in $C_{SG}$. {\it $X$-3-paths are assigned} to $C_{SG}$ or $D_{SG}$ if and only if every cycle of $C_{SG}$ or $D_{SG}$ has a vertex of $X \subseteq V$ at every three vertices. \\

\section{A sufficient condition of Reed's conjecture}
\label{sec:3}

\begin{thm}\label{T1}
For a connected cubic graph $G$, if $\gamma(G) > \lceil |V|/3 \rceil$ then $\gamma(G) = i(G)$.
\end{thm}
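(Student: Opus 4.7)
I would prove the theorem by its contrapositive: assume $\gamma(G) < i(G)$ and deduce $\gamma(G) \leq \lceil |V|/3 \rceil$. The hypothesis $\gamma(G) < i(G)$ rules out any independent d-set, since an independent dominating set is automatically a maximal independent set (every external vertex must be adjacent to it, else it could be added) and would force $i(G) \le \gamma(G)$. Fix a d-set $X$ with $|X| = \gamma(G)$ and, subject to this, with $|E(G[X])|$ minimum; then some edge $xy$ of $G$ has both endpoints in $X$. Because $G$ is cubic, $d_G(x) = d_G(y) = 3$, so the core $I = N[x] \cup N[y]$ is present in $G$ and the degree-2 swap used in the proof of Proposition B is blocked.

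The plan is then to convert the two-layer minimality of $X$ into a local lower bound on external vertices around each edge of $G[X]$. Minimality of $|X|$ forces every $x \in X$ to carry a private neighbor in $V \setminus X$, for otherwise $X \setminus \{x\}$ still dominates. Minimality of $|E(G[X])|$ forbids any swap of an endpoint of an $X$-edge with one of its external neighbors that preserves domination without increasing $|E(G[X])|$. Together these obstructions pin down the picture around each edge $uv$ of $G[X]$: the six vertices of the core at $uv$ must be distinct and must contribute at least four external vertices to $V \setminus X$ that are charged only to $\{u, v\}$. A parallel argument shows that each $x \in X$ with no $X$-neighbor carries itself plus at least two external private neighbors, disjoint from charges elsewhere.

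Assembling these charges yields $|V| \ge 3|X|$, hence $\gamma(G) = |X| \le \lceil |V|/3 \rceil$, contradicting the hypothesis. The main obstacle, where I expect most of the work to sit, is the global disjointness of the charges: two $X$-edges sharing an endpoint, or distinct components of $G[X]$ competing for the same external neighborhood, can a priori double-count external vertices and break the naive per-edge count. To resolve this I would lean on the ear and cycle decomposition set up in the second half of the paper, in particular the cycles in $C_G$ of length $0 \pmod 3$, to organize $N[X]$ into vertex-disjoint triples each containing exactly one member of $X$; the \emph{$X$-3-paths-assigned} terminology introduced there appears tailored for precisely this disjoint-triple bookkeeping.
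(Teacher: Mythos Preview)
Your contrapositive setup and the choice of an edge-minimal d-set $X$ match the paper, and Lemma~\ref{disjoint} does give the local disjointness you want around each edge of $G[X]$: for $v_1v_2\in E(G[X])$ the closed neighbourhood $N_G[\{v_1,v_2\}]$ is disjoint from $N_G[w]$ for every other $w\in X$ (so in particular $G[X]$ is a matching). The gap is exactly where you place it, and your proposed resolution does not close it. The vertices of $X$ that are isolated in $G[X]$ are not covered by Lemma~\ref{disjoint}; two such vertices may share external neighbours freely, so no per-vertex charge of three survives and the inequality $|V|\ge 3|X|$ cannot be read off from local counts alone. Reaching for the cycle collections $C_{SG}$, $D_{SG}$ and the $X$-3-path assignment from the second half of the paper is a mismatch: that machinery is developed under a $3$-connectedness hypothesis, whereas Theorem~\ref{T1} is about arbitrary connected cubic graphs, and the interesting cases (the known counterexamples to Reed's conjecture) have connectivity one or two. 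The paper does not invoke any of that material in the proof of Theorem~\ref{T1}.

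What the paper actually does is different in kind. Instead of a static charging argument, it sets up two graph-modification operators: deletion of a redundant edge set $U'\subseteq U_G(X)$ (Fact~\ref{U'}) and a subdivide-and-cut operation $G\mapsto G(T')$ on a set $T'\subseteq T_G(X)$ of private neighbours (Fact~\ref{T'}). It then runs an induction on the defect $|X|-|X'|$, where $X'$ is a d-set of $G-T'$, with $G(T')$ chosen so that adjacent and distance-two pairs of $X$-vertices land on common path components. The base case $|X|-|X'|=0$ is handled by a direct path analysis in the modified graph $G''$ together with Lemma~\ref{disjoint}; the inductive step peels off $m$ closed neighbourhoods $S\subseteq N[v]$ with $|S|\ge 3$, passes to $H=G-mS$, and applies the induction hypothesis there. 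It is this modification-and-induction scheme, not a disjoint-triple packing borrowed from the $3$-connected theory, that carries the global part of the argument you are missing.
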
 
 
It is central for this proof how the edges and vertices are deleted from a cubic graph to preserve its dominating set. By deleting a vertex of $G$, some path is broken.
A substitution is needed to connect them and preserve an original set of vertices of the dominating set. 

\begin{lem}\label{disjoint}
For a graph $G$ with $\Delta(G) \leq 3$ and a d-set $X$ with $E(X)$ minimal and nonempty, if $v_1, v_2, w \in X$ and $v_1v_2 \in E(X)$ then $N_G[\{v_1, v_2\}] \cap N_G[w] = \emptyset$.
\end{lem}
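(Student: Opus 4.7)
The plan is to argue by contradiction. Suppose $N_G[\{v_1, v_2\}] \cap N_G[w]$ contains a vertex $u$; necessarily $w \notin \{v_1, v_2\}$, since otherwise $w \in N_G[w]$ trivially witnesses the intersection. I will exhibit a dominating set $X^*$ that is either strictly smaller than $X$ or satisfies $|X^*| = |X|$ and $|E(X^*)| < |E(X)|$, contradicting the respective minimality hypotheses.

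By the symmetry between $v_1$ and $v_2$, I may assume $u \in N_G[v_1]$. Unpacking closed-neighborhood membership, this places us in one of two configurations, up to the symmetry swapping $v_1$ and $v_2$: (A) $w \in N_G(v_1)$, so that both $v_2$ and $w$ are neighbors of $v_1$ lying in $X$; or (B) there is a vertex $u' \in N_G(v_1) \setminus \{v_2, w\}$ with $u' w \in E(G)$, so that $u'$ is a common neighbor of $v_1$ and $w$. The sub-cases where $u \in \{v_1, v_2, w\}$ collapse into (A) after possibly swapping $v_1$ and $v_2$.

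Next I would consider the set $X \setminus \{v_1\}$. Its element $v_2$ dominates both itself and $v_1$. Among the at most two neighbors of $v_1$ distinct from $v_2$, one is already dominated by $X \setminus \{v_1\}$ in each configuration, namely $w$ itself in case (A), or $u'$ (dominated by $w \in X \setminus \{v_1\}$) in case (B). Hence at most one neighbor of $v_1$, which I will call $z$, could fail to be dominated by $X \setminus \{v_1\}$. If no such $z$ exists, or if $z$ is already dominated by $X \setminus \{v_1\}$, then $X^* = X \setminus \{v_1\}$ is a dominating set with $|X^*| < |X|$, contradicting the minimality of $|X|$. Otherwise set $X^* = (X \setminus \{v_1\}) \cup \{z\}$, which satisfies $|X^*| = |X|$ and is a dominating set, since $z$ now dominates itself while nothing previously dominated becomes undominated.

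To finish, I would show $|E(X^*)| < |E(X)|$. Removing $v_1$ deletes the edge $v_1 v_2$ (and also $v_1 w$ in case (A)) from $E(X)$, so at least one edge is lost. The critical observation, and the main obstacle to verify carefully, is that $z$ introduces no edges into $E(X^*)$: the failure of $z$ to be dominated by $X \setminus \{v_1\}$ forces $z \notin X$ and forbids any neighbor of $z$ other than $v_1$ from lying in $X$, and since $v_1 \notin X^*$, no neighbor of $z$ lies in $X^*$. Hence $|E(X^*)| \leq |E(X)| - 1 < |E(X)|$, contradicting the minimality of $E(X)$. The hypothesis $\Delta(G) \leq 3$ is used precisely here, to bound the number of neighbors of $v_1$ — producing the single exceptional $z$ — and to make the "no new edges from $z$" step immediate.
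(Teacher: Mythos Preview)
Your proof is correct and uses the same exchange argument as the paper: remove one endpoint of the edge $v_1v_2$ from $X$, observe that at most one neighbor can lose domination, and re-insert that neighbor if necessary to obtain a dominating set of the same size with strictly fewer internal edges. The paper organizes this as a direct case analysis on $|N(x_1)\setminus\{x_2\}|\in\{0,1,2\}$ (and in passing shows that both endpoints must have degree exactly~3), whereas you frame it as a contradiction that uses the assumed vertex $w$ to guarantee one neighbor is already dominated; the underlying swap is identical.
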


\begin{proof}
For $x_1, x_2 \in V(X)$, let $x_1x_2 \in E(X)$. Suppose $N(x_1) \setminus \{x_2\} = \emptyset$ then $X \setminus \{x_1\}$ is a d-set, contrary to the minimality of $V(X)$. For $N(x_1) \setminus \{x_2\} = \{v_1\}$, let $v_2 \in N(v_1) \setminus \{x_1\}$. Suppose $v_1 \in X$ or $v_2 \in X$ then $X \setminus \{x_1\}$ is a d-set, contrary to the minimality of $V(X)$. Otherwise, $(X \setminus \{x_1\}) \cup \{v_1\}$ is a d-set, contrary to the minimality of $E(X)$. For $N(x_1) \setminus \{x_2\} = \{v_1, w_1\}$, let $v_2 \in N(v_1) \setminus \{x_1\}$ and $w_2 \in N(w_1) \setminus \{x_1\}$. Suppose $v_1 \in X$ or $v_2 \in X$ then $w_2 \notin X$ by the minimality of $V(X)$. $(X \setminus \{x_1\}) \cup \{w_1\}$ is a d-set, contrary to the minimality of $E(X)$. Thus, $v_1, v_2 \notin X$.
\end{proof}

\begin{defn}
For a graph $G$, $v_1, v_2 \in V$ and $X \subseteq V$, if $v_1v_2 \in E$ is \\
(i) $v_1, v_2 \notin X$ or \\
(ii) $v_1 \in X$, $v_2 \notin X$, $(N(v_2) \setminus \{v_1\}) \cap X \ne \emptyset$ or \\
(iii) $v_1, v_2 \in X$ \\
then a set of $v_1v_2$ is denoted by $U_G(X)$, or $U(X)$.
\end{defn}

\begin{fact}\label{U'}
For a graph $G$, a d-set $X$ and $U' \subseteq U(X)$, $X$ is a d-set of $G - U'$. 
\end{fact}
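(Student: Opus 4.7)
The plan is to verify that every $v\in V\setminus X$ still has a neighbor in $X$ inside $G-U'$; members of $X$ dominate themselves and require no further check. Fix $v\in V\setminus X$ and choose $u\in X\cap N_G(v)$, which exists because $X$ is a d-set of $G$. I would classify the edge $vu$ against the three conditions defining $U(X)$: since $v\notin X$ and $u\in X$, neither (i) nor (iii) can describe $vu$, leaving (ii) as the only route by which $vu$ can enter $U(X)$. Condition (ii) in turn forces $v$ to have a second neighbor in $X$.

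Consequently, if $u$ is the unique $X$-neighbor of $v$ in $G$, then $vu\notin U(X)\supseteq U'$, so $vu$ survives in $G-U'$ and dominates $v$. Otherwise $v$ has at least two neighbors in $X$, and the task becomes to show that at least one of the corresponding edges escapes $U'$. Removing type (i) edges is immediately harmless, because both endpoints lie outside $X$ and are dominated for reasons independent of the edge; removing type (iii) edges is harmless because both endpoints already belong to $X$; so the only delicate deletions at $v$ are of type (ii). I would handle these by induction on $|U'|$, peeling off edges one at a time and reapplying the case split after each deletion, so that condition (ii) is re-verified in the current graph rather than once and for all in $G$.

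The main obstacle I anticipate lies precisely in this type (ii) bookkeeping: (ii) is symmetric in the $X$-neighbors of $v$, so a naive reading of the definition would appear to license deleting every $X$-edge at $v$ simultaneously. Making the inductive removal precise — being explicit that as soon as $v$ has only one $X$-neighbor left in the current graph $G'$, the last remaining edge at $v$ no longer satisfies (ii) and therefore cannot have been selected for deletion at the corresponding step — is the one place that needs care. Once that is in place, collecting the argument over all $v\in V\setminus X$ yields the claim.
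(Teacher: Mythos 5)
The paper states this Fact without proof, so the only question is whether your argument establishes the statement as written; it does not, and the trouble sits exactly where you yourself located it. Membership in $U(X)$ is decided once and for all in $G$: the hypothesis is $U'\subseteq U_G(X)$, not that each edge of $U'$ belongs to $U_{G'}(X)$ for the intermediate graph $G'$ from which your induction deletes it. Consequently the key step of your plan --- ``as soon as $v$ has only one $X$-neighbor left in $G'$, the last remaining edge at $v$ no longer satisfies (ii) and therefore cannot have been selected'' --- is a non sequitur: that edge can perfectly well have been selected, because selection happened in $G$, where $v$ still had two $X$-neighbors. In fact the statement in its literal, set-wise reading is false. Take $G=P_5$ with vertices $a,u_1,v,u_2,b$ in order and $X=\{u_1,u_2\}$, a minimum dominating set. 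Both $u_1v$ and $u_2v$ are of type (ii) in $G$ (each time the other $X$-neighbor witnesses $(N(v)\setminus\{v_1\})\cap X\neq\emptyset$), so $U'=\{u_1v,u_2v\}\subseteq U(X)$; yet in $G-U'$ the vertex $v$ is isolated and undominated. So no amount of bookkeeping inside your induction can close the gap: what your sequential peeling argument actually proves is a modified statement in which $U'$ is built adaptively, each deleted edge being required to lie in $U_{G'}(X)$ of the current graph (equivalently, with the extra hypothesis that every vertex of $V\setminus X$ retains an $X$-neighbor in $G-U'$). That adaptive reading is presumably what the paper needs when it invokes the Fact in the proof of Theorem 2.1, but it is not what the Fact says, and a correct write-up must either strengthen the hypothesis or reformulate the deletion procedure.

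Two smaller remarks. Your handling of types (i) and (iii) is fine, and your observation that only type (ii) edges are delicate is exactly right --- the defect is in the statement, not in your instinct. But note also that a d-set is a \emph{minimum} dominating set, so beyond the domination check you should add one line: deleting edges cannot decrease the domination number, hence $\gamma(G-U')\geq\gamma(G)=|X|$, and any set of size $|X|$ that dominates $G-U'$ is automatically minimum there. Your proposal verifies only the dominating property.
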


\begin{defn}
For a graph $G$ and $Y \subseteq V$, if $t_1 \in Y$ has a set of vertices $B(t_1)$ such that for all $b \in B(t_1)$, $b \in N(t_1)$ and $(N[b] \setminus \{t_1\}) \cap Y = \emptyset$, take $B(t_1)$ maximal. $\bigcup_{t_1 \in Y}B(t_1)$ is denoted by $T_G(Y)$, or $T(Y)$. 
\end{defn}

\begin{defn}
For a graph $G$, $Y \subseteq V$ and $v_1 \in T(Y)$, let $t_1 \in N(v_1) \cap Y$. Delete the edge $v_1t_1$. For each $t_2 \in N(v_1) \setminus \{t_1\}$, subdivide the edge $v_1t_2$ by a new vertex $w_2$ respectively. For a set of $v_1$ applied this replacement, say $S$, the resulting graph is denoted by $G(S)$.
\end{defn}

\begin{fact}\label{T'}
For a graph $G$, $Y \subseteq V$ and $T' \subseteq T(Y)$, if $Y$ is a d-set of $G - T'$ then $Y \cup T'$ is a d-set of $G(T')$. 
\end{fact}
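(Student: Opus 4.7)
The plan is a three-case verification that $Y \cup T'$ dominates $G(T')$, followed by a minimality check. Partition $V(G(T'))$ into: (i) vertices of $T'$, (ii) remaining original vertices $V(G) \setminus T' = V(G - T')$, and (iii) the newly inserted subdivision vertices $w_2$. Class (i) vertices are in $T' \subseteq Y \cup T'$. A class-(iii) vertex $w_2$ was introduced while processing some $v_1 \in T'$, hence it is adjacent in $G(T')$ to $v_1 \in T' \subseteq Y \cup T'$. For $u$ in class (ii), the hypothesis gives either $u \in Y$ or an edge $uy \in E(G - T')$ with $y \in Y$; since the construction only modifies edges incident to $T'$, this edge persists unchanged in $G(T')$ and supplies the required dominator.

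The size bookkeeping relies on $Y \cap T' = \emptyset$, which is built into the definition of $T(Y)$: for $b \in B(t_1)$ the condition $(N[b] \setminus \{t_1\}) \cap Y = \emptyset$ applied to the element $b \in N[b] \setminus \{t_1\}$ itself forces $b \notin Y$. Hence $|Y \cup T'| = |Y| + |T'|$.

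The main obstacle is the minimality direction $\gamma(G(T')) \geq |Y| + |T'|$. The plan is to exploit the fact that the local stars $A_{v_1} := \{v_1\} \cup N_{G(T')}(v_1)$ for $v_1 \in T'$ are pairwise vertex-disjoint and that every dominating set $D$ of $G(T')$ must meet each of them (to dominate $v_1$). A projection $D \mapsto D' \subseteq V(G - T')$ that discards $D \cap T'$, sends each subdivision vertex $w_2 \in D$ to its non-$T'$ endpoint $t_2$, and retains $D \cap V(G - T')$ can be shown by a short check (using that a subdivision vertex is only adjacent to its $v_1$ and $t_2$) to yield a dominating set of $G - T'$. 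The goal is then the counting bound $|D'| \leq |D| - |T'|$, which combined with the minimality of $Y$ gives $|D| \geq |Y| + |T'|$. The delicate situation is when $D \cap A_{v_1}$ consists of a single subdivision vertex $w_2$ whose far endpoint $t_2 \notin T'$ is itself dominated only by $w_2$; I would resolve this by a local exchange swapping $w_2$ for $v_1$ in $D$ (adding $t_2$ if needed) to make the per-star accounting uniformly deliver a saving of at least one.
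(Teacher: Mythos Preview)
The paper states this as a Fact with no accompanying proof; the label signals that it is meant to be immediate from the definitions, so there is no argument in the paper to compare against. Your three-case verification that $Y\cup T'$ dominates $G(T')$ is correct and is exactly the kind of one-line check the author has in mind: every edge modified in forming $G(T')$ is incident with some $v_1\in T'$, so domination within $V(G)\setminus T'$ is inherited from $G-T'$, while the vertices of $T'$ and the new subdivision vertices are covered by $T'$ itself. Your observation that $Y\cap T'=\emptyset$ (hence $|Y\cup T'|=|Y|+|T'|$) is also correct.

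Where you go beyond the paper is the minimality direction. Although ``d-set'' is defined as a \emph{minimum} dominating set, the author seems to use Fact~\ref{T'} purely as a domination-preservation device: in the proof of Theorem~\ref{T1} it is invoked with the d-set $X$ of $G$, not with a d-set of $G-T'$, which would be required under a strict reading. Your minimality sketch, in any case, is not closed. First, the projection that sends a subdivision vertex $w_2\in D$ to its ``non-$T'$ endpoint $t_2$'' is undefined when both endpoints of the subdivided edge lie in $T'$, and nothing in the definition of $T(Y)$ forbids adjacent members (for instance, in the path $t_1\,b\,b'\,t_1'$ with $Y=\{t_1,t_1'\}$ one has $b,b'\in T(Y)$ adjacent). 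Second, in the delicate case you flag, the proposed exchange ``swap $w_2$ for $v_1$, adding $t_2$ if needed'' can increase $|D|$ by one rather than save one, so the per-star accounting does not yield $|D'|\le |D|-|T'|$. These issues are probably repairable with a more careful normalisation of $D$ on each star $A_{v_1}$, but given how the paper actually uses the Fact, the minimality clause is likely more than was intended.
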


\noindent {\it Proof of Theorem \ref{T1}.}
Let $G$ be a connected cubic graph and $X$ be a d-set. Let $E(X)$ be minimal. Let $||X|| > 0$. For $T' \subseteq T(X)$, let $X'$ be a d-set of $G - T'$. We construct a graph $G(T')$ where the sequences of two close vertices of $X$ ($P = a_1a_2$ such that $a_1, a_2 \in X$ or $Q = b_1b_2b_3$ such that $b_1, b_3 \in X$) are on the same path if possible and $|X| - |X'|$ is maximum (as condition (0)). We show by induction for $|X| - |X'|$. First let $|X| - |X'| = 0$. By $||X|| \geq 1$ there exists a path $P = a_1a_2$ and $a_1, a_2 \in X$, which is $P \subseteq G$. Suppose there exists a path $Q = Q_1$ or $Q_2$, where $Q_1 = b_1b_2b_3$ and $b_1, b_3 \in X$, and $Q_2 = c_1c_2$ and $c_1, c_2 \in X$, which is $Q \subseteq G$. Now we construct a graph $G''$ from $G$. For $T' \subseteq T(X)$, let $G(T') = G'$. By Fact \ref{T'}, $X \cup T' = Y$ is a d-set of $G'$. For $U' \subseteq U_{G'}(Y)$, let $G' - U' = G''$. By Fact \ref{U'}, $Y$ is a d-set of $G''$. Suppose there exists $R = P \cdots Q$ in $G'$. Suppose also $R \subseteq G''$ as a path component. Even for a path $R' = \alpha R \beta$ such that $\alpha, \beta \notin Y$, $Y(R')$ is not a d-set of $R'$, also for $R$, a contradiction. So $R \not \subseteq G'$. Since $G$ is connected, $Q_2 \not \subseteq G$. If $Q_1 \subseteq G$ then there exist $Y$-3 paths $P \cdots b_2b_1$ and $P \cdots b_2b_3$ in $G$. By Lemma \ref{disjoint}, $|N_G(b_1) \setminus N_G[X \setminus \{b_1\}]| = 2$ and $|N_G(b_3) \setminus N_G[X \setminus \{b_3\}]| = 2$, so $|X| \leq \lceil |V|/3 \rceil$. Suppose $|X| - |X'| \leq k$ and $|X| \leq \lceil |V|/3 \rceil$. Let $|X| - |X'| = k + 1$. For $a_1, a_2 \in X$, let $a_1a_2 \in E(X)$. For $v \in X \setminus \{a_1, a_2\}$, $v \in S \subseteq N[v]$ with $|S| \geq 3$ and $m \geq 1$, let $G - mS = H$. Let $Z$ be a d-set of $H$ and for $T'_{H} \subseteq T_{H}(Z)$, let $Z'$ be a d-set of $H - T'_{H}$. We construct a graph $H(T'_H) = H'$ which satisfies condition (0) for the corresponding sets. That is, $H'$ contains a path component $R$ that makes $|Z| - |Z'|$. It is not difficult to verify that $||Z|| \geq 1$, $|Z| = |X| - m$ and $|Z| - |Z'| \leq k$ by so taking $mS$ about $R$. By induction, $|Z| \leq \lceil (|V| - m|S|)/3 \rceil$, which implies $3(|X| - m) \leq |V| - 3m + 2$. Thus, $|X| = \lceil (3|X| - 2)/3 \rceil \leq \lceil |V|/3 \rceil$. This completes the proof of Theorem \ref{T1}. \qed

\section{Domination structure in 3-connected graphs}
\label{sec:3} 

\begin{thm}\label{T2}
For a 3-connected graph $G$, ${D}_{SG}$ corresponds to its d-sets.
\end{thm}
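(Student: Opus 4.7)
The plan is to establish a correspondence between d-sets of $G$ and the object $D_{SG}$ by exploiting the ear decomposition guaranteed by Proposition D and using 3-connectivity to control cycle lengths modulo $3$. The forward direction constructs $D_{SG}$ from a given d-set; the backward direction reconstructs a d-set from $D_{SG}$ via its assigned $X$-3-paths.

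For the forward direction, I would start with a d-set $X$ of $G$ with $E(X)$ minimal and first build an initial cycle $C_0$ of length $0 \bmod 3$ carrying an $X$-3-path. The minimality of $X$ forces each dominator to have a private region (in the spirit of Lemma \ref{disjoint}), so $C_0$ can be routed through $X$-vertices spaced three apart. Then I would inductively adjoin ears as in Proposition D: each new ear $P$ combines with a segment of the already-built subgraph to form a new cycle $C_{i+1}$, which by construction connects to $C_i$ without seam. The delicate step is to choose either the ear or its closing segment so that $|C_{i+1}| \equiv 0 \pmod 3$ and the $X$-3-path extends; here 3-connectivity is essential, since between any two attachment points there are three internally disjoint paths, providing representatives across the residue classes modulo $3$. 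Taking $C_{SG}$ maximal and then trimming to cycles with an exclusive vertex yields $D_{SG}$ equipped with the induced $X$-3-path assignment.

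For the converse, given $D_{SG}$ together with an $X$-3-path assignment, I would define $X$ as the union over cycles of the designated vertices. I would verify that the composite graph of $C_{SG}$ covers $V(G)$ (using maximality together with the ear-decomposition build-up in a 3-connected graph), and that every vertex on any cycle of $D_{SG}$ is within distance one of an $X$-vertex because designated vertices occur every three positions. Minimality of $X$ then mirrors the exclusive-vertex criterion that distinguishes $D_{SG}$ inside $C_{SG}$: discarding a cycle without an exclusive vertex would remove only a dominator whose role is already covered elsewhere, so the surviving cycles correspond exactly to the vertices a d-set cannot omit.

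The main obstacle is the modulo-$3$ length control when adjoining ears. Given a new ear $P$ attached to $C_i$ at two points $u,v$, the new cycle can be formed using either arc of $C_i$ between $u$ and $v$, but these two choices have lengths determined by the residue of $|C_i|$ and may both fail to be $0 \bmod 3$; guaranteeing a suitable cycle therefore requires invoking an additional internally disjoint $u$-$v$ route supplied by 3-connectivity, and showing that the $X$-3-path extends coherently along it. The secondary bookkeeping challenge is to confirm that the exclusive-vertex trimming from $C_{SG}$ to $D_{SG}$ exactly matches the transition from general dominating sets to minimum ones; these two steps are where I expect the proof to require the most care.
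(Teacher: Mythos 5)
Your architecture inverts the paper's, and the inversion is exactly where the gaps lie. The paper never builds cycles out of a given d-set: $C_{SG}$ is taken to be a maximal family of cycles of length 0 mod 3 connected without seam, defined independently of any dominating set, and essentially the whole proof (Claims \ref{R}--\ref{R1}, the four vertex types and the twenty cases ($*2$)) goes into showing that every component of $G$ minus the union of these cycles has at most one vertex, and that such a leftover vertex has all its neighbours in $X$; only then is a d-set read off by assigning $X$-3-paths to $D_{SG}$. Your forward direction instead assumes that an arbitrary d-set $X$ can be threaded onto 0 mod 3 cycles with $X$-vertices at exactly every third position, justified only by an appeal ``in the spirit of Lemma \ref{disjoint}.'' That lemma is stated for graphs with $\Delta(G)\leq 3$, and even there it only yields disjointness of certain closed neighbourhoods; nothing in it produces the exact spacing-three pattern along cycles, and this is precisely the hard content of the theorem rather than a consequence of minimality of $E(X)$. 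Your idea of using 3-connectivity for residue control modulo 3 is genuinely close to the paper's Claims \ref{emptyset} and \ref{triangle}, but there it is deployed to show that a nontrivial component outside the cycle system would contradict the maximality of $C_{SG}$, not to steer an ear decomposition anchored at a d-set.

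Your backward direction also claims more than is available: you assert that the union of the cycles of $C_{SG}$ covers $V(G)$, whereas the paper only establishes (Claim \ref{R1}) that leftover components are single vertices whose entire neighbourhood lies in $X$ --- such vertices are dominated but not covered --- and even that weaker statement consumes the bulk of the argument (the type classification (a)--(d) and the twenty-case table). Treating coverage as a routine consequence of ``maximality together with the ear-decomposition build-up'' skips the step that actually requires 3-connectivity and the seamless-connection condition. Likewise, the equivalence you posit between the exclusive-vertex trimming from $C_{SG}$ to $D_{SG}$ and the minimality of the dominating set is asserted rather than argued; the paper's own justification is the remark that a cycle of $C_{SG}$ without an $X$-3-path meets $X$ in the empty set, and your proposal would need an actual argument at this point to close the correspondence in both directions.
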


\noindent {\it Proof of Theorem \ref{T2}.} 
Let $G$ be a 3-connected graph. Let $R$ be a component of $G - \bigcup \texttoptiebar{C}_{SG}$. First, we assume that $C_{SG}$ is unique, otherwise consider one by one.
 Suppose that $R \ne \emptyset$. Note that $N_G(R) \cap \texttoptiebar{C}_{SG}$ has at least 3 vertices since $G$ is 3-connected. Let $U = N_G(R) \cap \texttoptiebar{C}_{SG}$. Let $t_0, u_0, v_0 \in R$ be adjacent to $t, u, v \in U$ respectively.

\begin{claim}\label{R}
There exist a path from $t_0$ to $u_0$ and a path from $t_0$ to $v_0$ in $R$ which are internally disjoint. 
\end{claim}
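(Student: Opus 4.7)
My plan is to prove Claim~\ref{R} by contradiction, applying Menger's theorem on an auxiliary graph built over $R$ and then extracting a small vertex cut of $G$ to violate its 3-connectivity. I would first form $R^+$ from $R$ by adjoining a new vertex $x$ with edges $xu_0$ and $xv_0$; the two internally disjoint paths sought in $R$ exist if and only if $R^+$ contains two internally disjoint $t_0$--$x$ paths. By Menger's theorem (the two-vertex version) this fails exactly when some $w\in V(R^+)\setminus\{t_0,x\}$ separates $t_0$ from $x$ in $R^+$.

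Suppose such a $w$ exists; after treating the symmetric edge cases $w=u_0$ and $w=v_0$ separately, assume $w\in V(R)\setminus\{u_0,v_0\}$, and let $A$ be the component of $R-w$ containing $t_0$, so that neither $u_0$ nor $v_0$ belongs to $V(A)$. Every $G$-edge leaving $V(A)$ either uses $w$ (inside $R$) or enters $\texttoptiebar{C}_{SG}$, hence the set
\[
S\;=\;\{w\}\cup\bigl(N_G(V(A))\cap V(\texttoptiebar{C}_{SG})\bigr)
\]
is a vertex cut of $G$ with $V(A)$ on one side and $\{u_0,v_0\}$ on the other. Because $G$ is 3-connected, $|S|\ge 3$, and hence $|N_G(V(A))\cap V(\texttoptiebar{C}_{SG})|\ge 2$: in addition to $t$ there is some $t'\in U$ with a neighbour $t_0'\in V(A)$.

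This extra attachment enables an iteration: re-run the argument with the triple $(t',u,v)$ and base vertex $t_0'$. Either the new choice already admits the required pair of internally disjoint paths, or a new separator $w'$ appears whose $t_0'$-component lies strictly deeper inside $A$ by force of the second attachment to $\texttoptiebar{C}_{SG}$. Since $R$ is finite, the procedure terminates, and termination either produces the required paths or shrinks $V(A)$ until $S$ becomes a vertex cut of $G$ of size less than $3$, contradicting 3-connectivity. The main obstacle is precisely the bookkeeping of this iteration -- verifying that each re-parametrization strictly refines the separated side and that the terminal configuration delivers the disjoint paths rather than cycling through a fixed family of separators; a cleaner alternative would be to perform the same argument by induction on $|V(R)|$, peeling off $A$ from $R$ using the multi-attachment to $\texttoptiebar{C}_{SG}$ and invoking induction on the remainder.
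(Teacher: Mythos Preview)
Your route is genuinely different from the paper's. The paper does not pass through Menger or an auxiliary graph: it simply picks two neighbours $t_1,t_2\in N(t_0)\cap R$, takes any path $P$ from $t_1$ to $u_0$ and any path $Q$ from $t_2$ to $v_0$ inside $R$, and if they meet at some first vertex $x$ it uses the connectedness of $G-t_0-x$ to reroute one of the two initial segments out to a fresh vertex of $U$. That is a short, constructive detour argument rather than a separator/iteration scheme.

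Your Menger reduction to $R^+$ is a nice reformulation, but the iteration that follows has a real gap. Once you locate a separator $w$ and the $t_0$-side $A$, the extra attachment $t_0'\in A$ to $\texttoptiebar{C}_{SG}$ is certainly there by 3-connectivity, but re-running the argument with base vertex $t_0'$ proves the wrong statement: the claim is about the \emph{fixed} vertex $t_0$, and internally disjoint paths from $t_0'$ to $u_0,v_0$ say nothing about paths from $t_0$. Moreover, your ``strictly deeper'' assertion fails: since $w$ already separates every vertex of $A$ from $\{u_0,v_0\}$ in $R$, the \emph{same} $w$ is a valid separator for $t_0'$ as well, so the re-run can return $w'=w$ and $A'=A$ and never make progress. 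The additional attachments that 3-connectivity provides all land in $\texttoptiebar{C}_{SG}$, not in $R$, so they give you no new connectivity \emph{within $R$} between the $t_0$-side and $\{u_0,v_0\}$; the terminal dichotomy you describe (paths found, or $|S|<3$) is never reached.

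If you want to salvage the Menger approach, the honest conclusion from your separator $w$ is not an iteration but a \emph{relabelling}: the two attachments of $A$ to $U$ together with $w$ give you a new triple of boundary vertices for which the fan exists trivially (they sit on the two $A$-to-$U$ edges and an $A$-to-$w$ edge). That is essentially what the paper's rerouting does implicitly, and it is also why the paper's argument silently allows the target pair in $U$ to change.
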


\begin{proof} 
It is assumed that $t_0, u_0$ and $v_0$ are distinct. Let $t_1, t_2 \in N(t_0) \cap R$. Let $P$ be a path between $t_1$ and $u_0$ in $R$ and $Q$ be a path between $t_2$ and $v_0$ in $R$. Suppose that $P \cap Q \ne \emptyset$. Let $x$ be the first vertex of $P \cap Q$ from $t_0$. Since $G$ is 3-connected, $G - x - t_0$ is connected. Without loss of generality, there exists a path from a vertex of $P\ring{x}$ to a vertex of $U$ which avoids $x$. 
\end{proof}

\begin{claim}\label{emptyset}
$C_{SG} \ne \emptyset$.
\end{claim}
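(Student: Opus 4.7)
The plan is to reduce the claim to showing $C_G \neq \emptyset$ and then to produce a cycle of length divisible by $3$ via a theta-subgraph construction enabled by 3-connectivity.

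The reduction is immediate. Since $C_{SG} \subseteq C_G$ by definition, one direction is trivial; conversely, if $C \in C_G$, the singleton $\{C\}$ vacuously satisfies the ``connecting without seam'' condition and is therefore contained in some maximal such subfamily of $C_G$, so $C_{SG}$ is nonempty.

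To produce a cycle of length $\equiv 0 \pmod{3}$, fix any vertex $v$ with neighbors $u_1, u_2, u_3$ (which exist since $\delta(G) \geq 3$). By Menger's theorem there are three internally disjoint $u_1$-$u_2$ paths in $G$; one may be taken to be $u_1 v u_2$ of length $2$, while the other two, of lengths $a$ and $b$, form with it a theta subgraph whose three cycles have lengths $a + b$, $a + 2$, and $b + 2$. If none of these is $\equiv 0 \pmod{3}$, then $a, b \not\equiv 1 \pmod{3}$ and $a + b \not\equiv 0 \pmod{3}$, forcing the residue pattern $(a \bmod 3,\, b \bmod 3) \in \{(0,2),(2,0),(2,2)\}$. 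Repeating the theta construction with the pairs $(u_1,u_3)$ and $(u_2,u_3)$ — and tracking how the three resulting theta subdivisions share arcs inside $G$ — produces a system of residue constraints modulo $3$ that admits no consistent solution, yielding a cycle of length divisible by $3$.

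The main obstacle is closing the case analysis cleanly: one needs to choose the three theta subdivisions at $v$ so that their arcs overlap enough in $G$ for the modular constraints to become inconsistent rather than merely restrictive. A shorter fallback route is to invoke the known result of Dean, Lesniak and Saito that every 2-connected graph with minimum degree at least $3$ contains a cycle of length divisible by $3$; since 3-connectivity supplies both hypotheses, that result gives $C_G \neq \emptyset$, and hence $C_{SG} \neq \emptyset$, at once.
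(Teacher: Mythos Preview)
Your fallback citation (the theorem that every graph with minimum degree at least $3$ contains a cycle of length divisible by $3$) is valid and settles the claim, but the paper does not invoke an external result; it argues directly.

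Your direct argument and the paper's share the opening move---fix a vertex $x$, take three neighbours, and look for a cycle of length $\equiv 0\pmod 3$ through $x$---but diverge structurally at the next step. You run Menger on a \emph{single} neighbour pair to obtain a theta through $x$, and then propose repeating for the other two pairs and reconciling the three residue profiles. The paper instead works in $G-x$ and finds three pairwise internally disjoint paths $P_1,P_2,P_3$ joining the three neighbour pairs, i.e.\ a single triangle-shaped configuration around $x$. That single configuration is more efficient: concatenating two of the $P_i$ yields another path between two neighbours, so most residue patterns collapse immediately, and only the patterns $(0,1,1)$ and $(1,1,1)$ (in the paper's length convention) survive the first pass.

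The obstacle you flag---that three independently chosen thetas need not share arcs, so their residue constraints remain mutually consistent---is real, and it is not how the paper finishes. For its two residual cases the paper instead observes that, by $3$-connectivity, inner vertices of the $P_i$ must carry further paths among themselves or be adjacent to $x$, and it runs an iterative search (its ``Steps~1--5'') through these extra chords of the \emph{one} fixed configuration until a path of the required residue appears. That procedure is admittedly terse in the paper too, but it operates inside a single structure rather than trying to align three unrelated ones, which is why it has a chance of terminating where your overlap idea does not.
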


\begin{proof}
Take any vertex $x \in V$. Since $G$ is 3-connected, $x$ is adjacent to at least 3 vertices. Let $t_0, u_0, v_0 \in N_G(x)$. By Claim \ref{R}, there exist a path $P_1$ from $t_0$ to $u_0$, a path $P_2$ from $u_0$ to $v_0$ and a path $P_3$ from $v_0$ to $t_0$ in $G - x$ which are internally disjoint. It suffices to show that there exists a path $Q$ of length 2 mod 3 with two ends of $\{t_0, u_0, v_0\}$. If $|P_1|$, $|P_2|$ or $|P_3|$ is of length 2 mod 3 then the proof is complete. Let $|P_1| \equiv 0$ and $|P_2| \equiv 0$ mod 3. Then $|P_1P_2| \equiv 2$ mod 3. Otherwise, without loss of generality, two cases arise. 
\begin{itemize}
\item[(I)] $|P_1| \equiv 0$, $|P_2| \equiv 1$ and $|P_3| \equiv 1$ mod 3. 
\item[(II)] $|P_1| \equiv 1$, $|P_2| \equiv 1$ and $|P_3| \equiv 1$ mod 3. 
\end{itemize}
Note that the inner vertices of $P_1$, $P_2$ and $P_3$ have paths between them, otherwise adjacent to $x$. Let $\mathcal{S}_i$ be a set of paths which have an inner vertex $i$ as an end ($1 \leq i \leq p$). For the cases (I) and (II), the followings must be confirmed. \\
Step 1. Confirm that we obtain $Q$ which contains the path $S \in \mathcal{S}_1$. \\
Step 2. If all $S$ have $Q$, stop the steps. Otherwise let $\mathcal{T}_1$ be a set of $S$ which does not have $Q$, and $\emptyset$. \\
Step 3. For $\mathcal{S}_2, \cdots, \mathcal{S}_p$, apply Step 1 and Step 2. \\
Step 4. Confirm that we obtain $Q$ which contains the paths in $T \in \mathcal{T}_1 \times \mathcal{T}_2 \times \cdots \times \mathcal{T}_p$. \\
Step 5. If all $T$ have $Q$, stop the steps. \\
By these steps, we obtain $Q$. 
\end{proof}

\begin{claim}\label{closed path}
If $Y$-3-paths are assigned to $C_{SG}$, any two vertices of $\texttoptiebar{C}_{SG}$ have at least two $Y$-3-paths between them which have distinct penultimate vertices from both ends. 
\end{claim}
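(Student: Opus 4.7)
The plan is to exploit the ear-decomposition structure of $\texttoptiebar{C}_{SG}$. By the no-seam construction, $\texttoptiebar{C}_{SG}$ is built from one cycle of $C_{SG}$ by successively attaching ears, where each ear together with a sub-path of the current subgraph forms another cycle of $C_{SG}$ (of length $0 \bmod 3$). In particular $\texttoptiebar{C}_{SG}$ is $2$-connected (cf.\ Proposition~D), and the $Y$-$3$-assignment places a $Y$-vertex at every third position of every such cycle.

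First I would handle the base case where $p, q$ lie on a common cycle $C \in C_{SG}$: the two arcs of $C$ from $p$ to $q$ are both $Y$-$3$-paths (since $C$ is $Y$-$3$-assigned and of length $0 \bmod 3$), and their penultimate vertices at $p$ (resp.\ at $q$) are the two distinct $C$-neighbors of $p$ (resp.\ of $q$). For the general case I would induct on the number of ears. Let $C$ be the last cycle added via an ear $E$ with endpoints $a, b$ in the previously constructed subgraph $H$. For $p \in H$ and $q \in C \setminus H$, the inductive hypothesis inside $H$ yields $Y$-$3$-paths $P_a$ from $p$ to $a$ and $P_b$ from $p$ to $b$ with distinct penultimate vertices at $p$. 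I would then extend $P_a$ by the arc of $C$ from $a$ to $q$ avoiding $b$, and $P_b$ by the arc from $b$ to $q$ avoiding $a$. The two concatenations have distinct penultimate vertices at $p$ by choice and at $q$ because the two $C$-arcs approach $q$ from opposite $C$-neighbors. The remaining configurations (both vertices in $H$, both in $C$, or $p \in \{a,b\}$) reduce to the base case or the inductive hypothesis after a symmetric relabeling; when $p$ coincides with an ear endpoint, one path is routed along $E$ itself and the other along an arc of the cycle inside $H$.

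The main obstacle will be verifying that the splicing at the ear endpoints preserves the $Y$-$3$ property. Here the no-seam condition is indispensable: it guarantees that the cycle formed by $E$ together with its completing sub-path of $H$ belongs to $C_{SG}$, hence is $Y$-$3$-assigned and of length $0 \bmod 3$, so the $Y$-positions along $E$ align with those along the $C$-arcs being glued in. I expect this alignment, combined with the mod-$3$ length of each cycle in $C_{SG}$, to close the argument without any case distinction beyond the base case and the ear-induction; the distinctness of penultimate vertices is then automatic because in every sub-case the two paths enter each endpoint through different neighbors either by choice of the two arcs of a cycle or by the inductive choice within $H$.
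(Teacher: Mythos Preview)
Your proposal follows essentially the same route as the paper: both arguments exploit the no-seam ear structure of $\texttoptiebar{C}_{SG}$, handling two vertices on a common cycle via its two arcs and then propagating through successive seam-connected cycles. The paper compresses this into a two-cycle step (paths $x_1C_1v_1C_2x_2$ and $x_1C_1v_2C_2x_2$) followed by ``applying the same argument'' along the chain of cycles, while you phrase it as a formal induction on ears, but the idea and the key technical point you flag (phase alignment of the $Y$-pattern at the ear endpoints) are identical.
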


\begin{proof}
For two cycles of $C_{SG}$ which connect without seam, say $C_1$ and $C_2$, let $C_2$ be obtained from adding the ear $v_1C_2v_2 = P_1$ to $C_1$. Let $x_1 \in V(C_1)$ and $x_2 \in V(P_1)$. There exist the $Y$-3-paths $x_1C_1v_1C_2x_2$ and $x_1C_1v_2C_2x_2$. Let $x_1 \in V(C_1)$ and $x_2 \in V(C_2 - P_1)$. There exist the internally disjoint $Y$-3-paths $x_1C_1x_2$. Let two cycles $C_k$ and $C_{k + 1}$ of $C_{SG}$ connect without seam ($k \geq 1$). Applying the same argument, $x_k \in V(C_k)$ and $x_{k + 1} \in V(C_{k + 1})$ have at least two $Y$-3-paths between them. It is not difficult to verify that any two vertices $x_i, x_j \in \texttoptiebar{C}_{SG}$ are contained in $C_i$ and $C_j$ respectively, which are in the sequence of cycles connecting without seam in $C_{SG}$. We obtained the claim.
\end{proof}

Let $X$-3-paths be assigned to $C_{SG}$. The vertex $u \in \texttoptiebar{C}_{SG}$ is two types, $u \in X$ or $u \in V \setminus X$. 
If $r \in R$ is such that $N_G(r) \subseteq \{s\} \cup U$ for some $s \in R$ then let $R'$ be a set of $r$. Let $O = \texttoptiebar{C}_{SG} \cup R'$. Let $M$ be a component of $G - O$. Now, let $t_0, u_0, v_0 \in N_G(O) \cap M$. A cycle obtained from a path from $t_0$ to $u_0$, a path from $u_0$ to $v_0$ and a path from $v_0$ to $t_0$ in $M$ which are internally disjoint is denoted by $\blacktriangle$. Let $t, u, v \in O$ be adjacent to $t_0$, $u_0$ and $v_0$ respectively. The vertex $o \in \{t, u, v\}$ is four types ($*1$). 
\begin{itemize}
\item[(a)] $o \in \texttoptiebar{C}_{SG}$ and $o \in X$ 
\item[(b)] $o \in R'$ and $(N_O(o) \cap \texttoptiebar{C}_{SG}) \cap X = \emptyset$ 
\item[(c)] $o \in R'$ and $(N_O(o) \cap \texttoptiebar{C}_{SG}) \cap X \ne \emptyset$ 
\item[(d)] $o \in \texttoptiebar{C}_{SG}$ and $o \in V \setminus X$ 
\end{itemize}
Note that for (c), $(N_O(o) \cap \texttoptiebar{C}_{SG}) \subseteq X$, otherwise $o \in \texttoptiebar{C}_{SG}$. 
If there exists a path $Q$ between two vertices of $\{t, u, v\}$ (through the vertices in $M$) of specific length and types then $C_{SG}$ is not maximal. Twenty cases arise ($*2$) by simple case analysis. For example, a path $Q$ of length 2 mod 3 between types (a) and (a) is. \\

\begin{table}[h]
  \begin{minipage}[t]{.45\textwidth}
    \begin{center}
      \begin{tabular}{|c|c|c|} \hline
    \ & length & types \\ \hline
(1) & 2 mod 3 & (a) and (a) \\ \hline
(2) & 0 mod 3 & (a) and (b) \\ \hline
(3) & 2 mod 3 & (a) and (b) \\ \hline
(4) & 1 mod 3 & (a) and (c) \\ \hline
(5) & 0 mod 3 & (a) and (d) \\ \hline
(6) & 1 mod 3 & (a) and (d) \\ \hline
(7) & 1 mod 3 & (b) and (b) \\ \hline
(8) & 2 mod 3 & (b) and (b) \\ \hline
(9) & 0 mod 3 & (b) and (b) \\ \hline
(10) & 1 mod 3 & (b) and (c) \\ \hline

      \end{tabular}
    \end{center}

  \end{minipage}
  \hfill
  \begin{minipage}[t]{.45\textwidth}
    \begin{center}
      \begin{tabular}{|c|c|c|} \hline
    \ & length & types \\ \hline
(11) & 2 mod 3 & (b) and (c) \\ \hline
(12) & 0 mod 3 & (b) and (d) \\ \hline
(13) & 2 mod 3 & (b) and (d) \\ \hline
(14) & 1 mod 3 & (b) and (d) \\ \hline
(15) & 0 mod 3 & (c) and (c) \\ \hline
(16) & 0 mod 3 & (c) and (d) \\ \hline
(17) & 2 mod 3 & (c) and (d) \\ \hline
(18) & 2 mod 3 & (d) and (d) \\ \hline
(19) & 0 mod 3 & (d) and (d) \\ \hline
(20) & 1 mod 3 & (d) and (d) \\ \hline

      \end{tabular}
    \end{center}

  \end{minipage}
\end{table}

\noindent Note that, by Claim \ref{closed path}, two vertices of (a), (b), (c) and (d) have a $X$-3-path in $\texttoptiebar{C}_{SG}$ but the same pair may have a different connection, which appears in ($*2$). Also, that all of $\{t, u, v\}$ are type (c) is exceptional. Because we have a desired path $Q$ or $M$ itself forms another $C_{SG}$. Note that disjoint sets of $\texttoptiebar{C}_{SG}$ have common neighbors $x \in V(G) \setminus \bigoplus \texttoptiebar{C}_{SG}$ such that $N_G(x) \subseteq X$.

\begin{claim}\label{triangle}
If $M$ has $\blacktriangle$ then $M = \emptyset$.
\end{claim}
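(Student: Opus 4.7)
The plan is to assume for contradiction that $M \neq \emptyset$ with $\blacktriangle \subseteq M$ and to produce a path $Q$ matching one of the twenty forbidden cases of ($*2$), which violates the maximality of $C_{SG}$. Fix notation so that $\blacktriangle$ is formed by three internally disjoint paths $P_1, P_2, P_3$ in $M$, where $P_1$ joins $t_0$ to $u_0$, $P_2$ joins $u_0$ to $v_0$, and $P_3$ joins $v_0$ to $t_0$, with respective lengths $\ell_1, \ell_2, \ell_3$. For each unordered pair drawn from $\{t_0, u_0, v_0\}$ there are exactly two routes inside $\blacktriangle$, of lengths $\ell_i$ and $\ell_j + \ell_k$ where $\{i,j,k\} = \{1,2,3\}$. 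Prepending the edge from $t$, $u$ or $v$ at one end and appending the matching edge at the other, I obtain six candidate paths $Q$ between pairs of $\{t, u, v\}$, with lengths $\ell_i + 2$ and $\ell_j + \ell_k + 2$ modulo $3$. I would then verify that, for every assignment of types $(a),(b),(c),(d)$ to $t, u, v$, at least one of these six candidates falls into the table of ($*2$).

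The first simplification is that the pair-types $(b,b)$, $(b,d)$ and $(d,d)$ each forbid every residue modulo $3$, so whenever two of $\{t,u,v\}$ carry types drawn from $\{b, d\}$, any candidate $Q$ between them meets a forbidden case immediately. This disposes of a substantial portion of configurations. What remains are the cases with at most one of $t, u, v$ of type in $\{b, d\}$, so at least two carry type $(a)$ or $(c)$. For these I would combine the forbidden-residue sets of the three induced pair-types; for example, if all of $t, u, v$ have type in $\{a, c\}$, the pair-types drawn from $(a,a)$, $(a,c)$, $(c,c)$ have forbidden residues $\{2\}$, $\{1\}$, $\{0\}$, together exhausting $\mathbb{Z}/3\mathbb{Z}$. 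A routine inspection of the six candidate residues against these three pair-types then yields a suitable $Q$. Mixed scenarios with exactly one vertex of type in $\{b, d\}$ are handled analogously: a case analysis on $(\ell_1,\ell_2,\ell_3) \bmod 3$ shows that the six candidate residues cover enough of $\mathbb{Z}/3\mathbb{Z}$ to intersect the union of the three relevant forbidden sets.

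I anticipate two main obstacles. The first is the \emph{exceptional} scenario flagged in the excerpt, where all of $t, u, v$ have type $(c)$; the table does not cover this, and there I would argue directly that $\blacktriangle$ together with the three edges into $X$ and an appropriate $X$-3-path through $\texttoptiebar{C}_{SG}$ guaranteed by Claim \ref{closed path} either produces a new cycle of length $0 \bmod 3$ attachable to $C_{SG}$ without seam, or yields that $M$ itself forms a separate collection in $C_{SG}$, contradicting the maximality of $C_{SG}$ in either case. The second obstacle is confirming that each candidate $Q$ is internally disjoint from $\texttoptiebar{C}_{SG} \cup R'$; this is immediate from $P_1, P_2, P_3 \subseteq M$ and the definition of $M$ as a component of $G - O$, so the only vertices of $Q$ outside $M$ are the endpoints $t, u, v \in O$ themselves. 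Once both of these are settled, the case analysis concludes that $\blacktriangle$ cannot coexist with the maximality of $C_{SG}$, forcing $M = \emptyset$.
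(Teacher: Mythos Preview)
Your reduction to the six concatenated paths around $\blacktriangle$ does not always produce a forbidden residue. Take the cleanest failure: let $t$, $u$, $v$ all have type~$(a)$, so every pair has type $(a,a)$ and the only entry of ($*2$) available is length $\equiv 2 \pmod 3$. If $\ell_1 \equiv \ell_2 \equiv \ell_3 \equiv 1 \pmod 3$, your six candidates have lengths $\ell_i + 2 \equiv 0$ and $\ell_j + \ell_k + 2 \equiv 1$, and none hits residue~$2$; the case $\ell_1 \equiv \ell_2 \equiv \ell_3 \equiv 2$ fails symmetrically. Your phrase ``together exhausting $\mathbb{Z}/3\mathbb{Z}$'' tacitly assumes that all three pair-types $(a,a)$, $(a,c)$, $(c,c)$ are simultaneously present, which is false when the three vertices share a type. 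A similar obstruction occurs with two vertices of type~$(a)$ and one of type~$(c)$ at $(\ell_1,\ell_2,\ell_3) \equiv (2,1,1) \pmod 3$, so the problem is not confined to the monochromatic configuration.

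The paper closes this gap by not stopping at $\blacktriangle$. It observes that each inner vertex of $P_1$, $P_2$, $P_3$ has, by $3$-connectivity, a neighbour off its own path---either a chord of $\blacktriangle$ or a fresh attachment to $O$---and then runs the same iterative Step~1--5 search already used in Claim~\ref{emptyset}, over the sets $\mathcal{S}_i$ of such auxiliary paths and their combinations $\mathcal{T}_1 \times \cdots \times \mathcal{T}_p$. These chords alter the available residues enough to force a match with ($*2$). Your plan coincides with the paper's opening move (``if $|P_1|$, $|P_2|$, $|P_3|$ already give the right length, done''), but the recursive refinement through the $\mathcal{S}_i$ is precisely the missing ingredient; without it the residue case analysis cannot be completed.
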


\begin{proof}
Suppose that $M \ne \emptyset$. It suffices to show that there exists $Q$ as in ($*2$). Let $P_1$ be a path from $t_0$ to $u_0$, $P_2$ be a path from $u_0$ to $v_0$ and $P_3$ be a path from $v_0$ to $t_0$ in $M$ which are internally disjoint. According to the types ($*1$) of $t, u$ and $v$, if $|P_1|$, $|P_2|$ and $|P_3|$ are specified as ($*2$) then we obtain $Q$. Suppose otherwise. Note that the inner vertices of $P_1$, $P_2$ and $P_3$ have paths between them, otherwise adjacent to $O$. Let $\mathcal{S}_i$ be a set of paths which have an inner vertex $i$ as an end ($1 \leq i \leq p$). Then the followings must be confirmed. \\
Step 1. Confirm that we obtain $Q$ which contains the path $S \in \mathcal{S}_1$. \\
Step 2. If all $S$ have $Q$, stop the steps. Otherwise let $\mathcal{T}_1$ be a set of $S$ which does not have $Q$, and $\emptyset$. \\
Step 3. For $\mathcal{S}_2, \cdots, \mathcal{S}_p$, apply Step 1 and Step 2. \\
Step 4. Confirm that we obtain $Q$ which contains the paths in $T \in \mathcal{T}_1 \times \mathcal{T}_2 \times \cdots \times \mathcal{T}_p$. \\
Step 5. If all $T$ have $Q$, stop the steps. \\
By these steps, we obtain $Q$.
\end{proof}

\begin{claim}\label{R1}
$|R| \leq 1$.
\end{claim}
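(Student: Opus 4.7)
\textbf{Proof plan for Claim~\ref{R1}.} The strategy is to argue by contradiction: assume $|R| \geq 2$ and produce a $\blacktriangle$ in some component $M$ of $G - O$. Invoking Claim~\ref{triangle} would then force $M = \emptyset$, contradicting $|R| \geq 2$. I would split the argument into two cases depending on whether $R \setminus R' \neq \emptyset$ or $R = R'$.

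In the first case, let $M$ be the component of $G - O$ that contains a vertex of $R \setminus R'$. Since $G$ is 3-connected, no set of at most two vertices separates $M$ from the rest of $G$, so $|N_G(O) \cap M| \geq 3$; choose distinct $t_0, u_0, v_0 \in N_G(O) \cap M$. Repeating the internally-disjoint-paths argument of Claim~\ref{R} inside $M$, I would obtain three pairwise internally disjoint paths joining $t_0, u_0, v_0$, whose union is a $\blacktriangle$ contained in $M$. Applying Claim~\ref{triangle} yields $M = \emptyset$, the desired contradiction.

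In the second case, $R = R'$, so every $r \in R$ satisfies $N_G(r) \subseteq \{s_r\} \cup U$ for some $s_r \in R$, and $\delta(G) \geq 3$ gives $|N_G(r) \cap U| \geq 2$. Picking an edge $r_1 r_2$ in $R$ together with two $U$-neighbors at each endpoint, and combining these with the $X$-3-paths through $\texttoptiebar{C}_{SG}$ guaranteed by Claim~\ref{closed path}, a mod-3 case analysis analogous to the one carried out in Claim~\ref{emptyset} produces a cycle of length $0 \bmod 3$ attached to $\texttoptiebar{C}_{SG}$ by a single ear. This contradicts the maximality of $C_{SG}$, so this case also cannot arise.

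The main obstacle is the second case: one must verify that the residues of the paths through $\texttoptiebar{C}_{SG}$ can always be tuned so that some cycle through $r_1, r_2$ and a short segment of $\texttoptiebar{C}_{SG}$ has total length $\equiv 0 \pmod 3$, and moreover that this cycle attaches without seam so that it genuinely extends $C_{SG}$. I expect the branching argument from Steps 1--5 in the proof of Claim~\ref{triangle} to handle the residue enumeration once the four types (a)--(d) of endpoints in $U$ are classified; the role of Claim~\ref{closed path} is to guarantee a second penultimate vertex in $\texttoptiebar{C}_{SG}$ when the first choice gives the wrong residue.
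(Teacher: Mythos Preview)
Your reduction step matches the paper's opening move: the first line of its proof is ``By Claim~\ref{triangle}, it suffices to consider that $|R|\le 2$.'' Your case split $R\setminus R'\ne\emptyset$ versus $R=R'$ is equivalent to $|R|\ge 3$ versus $|R|\le 2$, since any connected $R$ with $|R|\ge 3$ contains a vertex with two neighbours in $R$ and hence lies outside $R'$.

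For $|R|=2$ the paper takes a different and more concrete route than your proposed residue enumeration. Writing $R=\{u,v\}$, it splits into cases (i)--(iii) according to whether $(N_G(u)\cap \texttoptiebar{C}_{SG})\cap X$ and $(N_G(v)\cap \texttoptiebar{C}_{SG})\cap X$ are empty. Case~(i), where both meet $X$, matches your intuition: the $X$-3-path $u'\texttoptiebar{C}_{SG}v'$ from Claim~\ref{closed path} together with $u'uvv'$ is a closed $X$-3-path, forcing $u,v\in\texttoptiebar{C}_{SG}$. But in case~(ii) (one nonempty, one empty) the paper cannot close up a cycle by residue matching alone. Instead it analyses the two $X$-3-paths through the non-$X$ neighbours $p,q$ of $v$, deduces that $N_G(p')\setminus R\subseteq X$ for a suitable $p'\in X$, and then invokes \emph{minimality of $E(X')$} for $X'=X\cup\{v\}$ together with the assumed uniqueness of $C_{SG}$ to reach the contradiction. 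This is not a mod-3 computation, and your Steps~1--5 mechanism from Claim~\ref{triangle} will not manufacture it: the obstruction comes from the dominating-set structure, not from the length of an ear. Cases~(iii) and~(v) are disposed of ``by the same argument of (ii)'', so this device is essential, and it is the missing ingredient in your plan.

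Finally, the paper continues past the bare statement of the claim: it runs cases~(iv) and~(v) for $|R|=1$ and records that the single vertex $x\in R$ must satisfy $N_G(x)\subseteq X$. This refinement is used immediately afterwards in the proof of Theorem~\ref{T2}, so even after patching the gap above you would still need to add it.
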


\begin{proof}
By Claim \ref{triangle}, it suffices to consider that $|R| \leq 2$. Suppose that $R = \{u, v\}$. From the definition of $R$, $u, v \not \in \texttoptiebar{C}_{SG}$. (i) Let $u' \in (N_G(u) \cap \texttoptiebar{C}_{SG}) \cap X \ne \emptyset$ and $v' \in (N_G(v) \cap \texttoptiebar{C}_{SG}) \cap X \ne \emptyset$. Then by Claim \ref{closed path}, $u'\texttoptiebar{C}_{SG}v'$ and $u'uvv'$ form a closed $X$-3-path, a contradiction. 
(ii) Let $(N_G(u) \cap \texttoptiebar{C}_{SG}) \cap X \ne \emptyset$ and $(N_G(v) \cap \texttoptiebar{C}_{SG}) \cap X = \emptyset$. It is assumed that $N_G(u) \cap \texttoptiebar{C}_{SG} \subseteq X$, for otherwise $u \in \texttoptiebar{C}_{SG}$. By Claim \ref{closed path}, for $p, q \in N_G(v) \cap \texttoptiebar{C}_{SG}$, $p$ and $q$ are in a closed $X$-3-path, say $R$. Let $p' \in (N_G(p) \cap \texttoptiebar{C}_{SG}) \cap X$, $p'' \in (N_G(p) \cap \texttoptiebar{C}_{SG}) \cap (V \setminus X)$, $q' \in (N_G(q) \cap \texttoptiebar{C}_{SG}) \cap X$ and $q'' \in (N_G(q) \cap \texttoptiebar{C}_{SG}) \cap (V \setminus X)$. If $R$ is composed of $pp'Rq'q$ and $pp''Rq''q$ then $pp''Rq''q$ and $pvq$ forms a closed $X$-3-path, a contradiction. Thus, $R$ is composed of $pp'Rq''q$ and $pp''Rq'q$. If $p'$ and $q'$ form a $X$-3-path $S$ between them distinct from $R$ then $q'Rp''p$, $pvq$, $qq''Rp'$ and $p'Sq'$ form a closed $X$-3-path, a contradiction. By Claim \ref{closed path}, it suffices that $N_G(p') \setminus R \subseteq X$. Since we assume that $C_{SG}$ is unique, for $X \cup \{v\} = X'$, $X'$ is a d-set of $G[V(\texttoptiebar{C}_{SG}) \cup R]$. Since a graph has a d-set $Y$ such that $E(Y)$ is minimal, if $E(X')$ is not minimal then $C_{SG}$ is not unique. Since $N_G(p') \setminus R \subseteq X'$, $E(X')$ is not minimal, a contradiction. 
(iii) Let $(N_G(u) \cap \texttoptiebar{C}_{SG}) \cap X = \emptyset$ and $(N_G(v) \cap \texttoptiebar{C}_{SG}) \cap X = \emptyset$. By the same argument of (ii), a contradiction follows. Let $R = \{u\}$. (iv) Let $p \in (N_G(u) \cap \texttoptiebar{C}_{SG}) \cap X \ne \emptyset$ and $q \in (N_G(u) \cap \texttoptiebar{C}_{SG}) \cap (V \setminus X) \ne \emptyset$. Let $q' \in (N_G(q) \cap \texttoptiebar{C}_{SG}) \cap X$ and $q'' \in (N_G(q) \cap \texttoptiebar{C}_{SG}) \cap (V \setminus X)$. Then by Claim \ref{closed path}, $p\texttoptiebar{C}_{SG}q'q$ and $puq$ form a closed $X$-3-path, a contradiction. (v) Let $N_G(u) \cap \texttoptiebar{C}_{SG} \subseteq V \setminus X$. By the same argument of (ii), a contradiction follows. Thus, if $|R| = 1$ then $x \in R$ satisfies $N_G(x) \subseteq X$.
\end{proof}

By Claim \ref{R1}, a d-set $X$ of $G$ is obtained by assigning $X$-3-paths to $D_{SG}$. Because if $C \in C_{SG}$ does not have a $X$-3-path then $C \cap X = \emptyset$. Let $C_{SG}$ be not unique. Let $\mathcal{D}$ be a set of $\bigoplus \texttoptiebar{D}_{SG}$. For $D_1 \in \mathcal{D}$, if $V(G) = D_1$ then $D_1$ is a desired one. Otherwise, a set of $V(G) \setminus \bigoplus \texttoptiebar{D}_{SG}$ is pairwise disjoint so that $|\mathcal{D}| \leq |V(G)|$. This completes the proof of Theorem \ref{T2}. \qed

\paragraph{Note}
The abstract of this paper appeared in 15th Cologne-Twente Workshop on Graphs and Combinatorial Optimization.

\end{document}